\theoremstyle{plain}
\theoremstyle{definition}
\theoremstyle{remark}
\begin{document}

\title{Asymmetric expansion preserves hyperbolic convexity. 
}
\subtitle{}


\author{Dhruv Kohli \and
        Jeffrey M. Rabin
}


\institute{Department of Mathematics\\
            University of California, San Diego\\
            La Jolla, CA 92093\\
            \email{\{dhkohli, jrabin\}@ucsd.edu}
            }

\date{January 2020}

\maketitle

\begin{abstract}
In an earlier paper we showed that the radial expansion of a hyperbolic convex set in the Poincar\'e disk about any point inside it results in a hyperbolic convex set. In this work, we generalize this result by showing that the asymmetric expansion of a hyperbolic convex set about any point inside it also results in a hyperbolic convex set. 
\keywords{Hyperbolic convexity, Poincar\'e disk, asymmetric expansion.}
\end{abstract}


\section{Introduction}
A convex set in the Euclidean or hyperbolic plane is one that contains the geodesic segment joining any pair of its points.
Convexity, being defined in terms of geodesics, is preserved by isometries.
In the Euclidean case it is also preserved by dilations or similarities and indeed by affine transformations. 
In a previous paper \cite{us}, we defined a hyperbolic analog of dilation, called (symmetric) radial expansion, and proved that it preserves hyperbolic convexity. 
In this paper we will generalize this result to the case of asymmetric radial expansion.
The Euclidean version of this is expansion by unequal factors $k_1,k_2 \ge 1$ along the $x$ and $y$ axes respectively. 
Equivalently, it acts in polar coordinates by sending $(r,\theta)$ to $(r',\theta')$, where $r'^2=r^2(k_1 \cos^2 \theta + k_2 \sin^2 \theta)$ and $k_1 \tan \theta' = k_2 \tan \theta$.

Let $\mathbb{D} \subseteq \mathbb{R}^2$ denote the Poincar\'e disk of unit radius parameterized as
\begin{align}
    X(r,\theta)=\tanh(r/2)(\cos\theta,\sin\theta), r > 0
\end{align}
and denote the origin $(0,0)$ by $0$. The coefficients of the first fundamental form are given by
\begin{align}
    E = 1,\ \ F = 0\ \  \text{and} \ \ G = \sinh^2 r, 
\end{align}
and the metric is $ds^2=dr^2 + \sinh^2 r \ d\theta^2$ which has curvature $-1$. The hyperbolic distance of $X(r,\theta)$ in $\mathbb{D}$ from $0$ is given by
\begin{align}
d(X(r,\theta)) = r.
\end{align}
Thus $(r,\theta)$ are geodesic polar coordinates centered at $0$.
Given $u,v \in \mathbb{D}$ there is a unique hyperbolic geodesic segment, denoted $[u,v]$, joining these points. When we treat $[u,v]$ as an oriented curve it will always be directed from $u$ toward $v$. Hyperbolic geodesics are arcs of Euclidean circles orthogonal to the unit circle, including Euclidean lines through the origin. A Euclidean circle centered at $a$ with radius $r$ is orthogonal to the unit circle $\partial \mathbb{D}$ if and only if $\left\|a\right\|^2=1+r^2$. The map
\begin{align}
    \tau_{c}(x) = \frac{(1-2c\cdot x +\left\|x\right\|^2)c + (1-\left\|c\right\|^2)x}{1-2c\cdot x +\left\|c \right\|^2\left\|x\right\|^2} 
\end{align}
is the unique hyperbolic isometry (see \cite{ungar}) that swaps the origin and $c \in \mathbb{D}$ with no rotation at the origin; in fact, $(\tau_c)'(0)=(1-\left\|c\right\|^2)I$.
\begin{definition}
    \label{rhconvex}
    A set $C \subseteq \mathbb{D}$ is hyperbolic convex (h-convex) if, for every $u,v \in C$, $[u,v]$ lies in $C$. Obviously, $C$ is h-convex if and only if $\tau_{c}(C)$ is h-convex for each $c \in \mathbb{D}$.
\end{definition}

\medskip

Given $\theta \in [-\pi/2,\pi/2)$ there is a unique hyperbolic geodesic $\gamma_0(\theta)$ emanating from $0$ with tangent vector $(\cos\theta,\sin\theta)$ at $0$. For $x \in \gamma_0(\theta)$, its asymmetric dilated image is the unique point given by $\delta_{0,(k_1,k_2)}(x)=x'$ on the geodesic $\gamma_0(\tan^{-1}((k_2/k_1)\tan\theta))$ with $d(x') = (k_1^2\cos^2\theta + k_2^2\sin^2\theta)^{1/2}d(x)$. If $x = X(r,\theta)$ then $x'=X(r',\theta')$ where $r' = r(k_1^2\cos^2\theta + k_2^2\sin^2\theta)^{1/2}$ and $\theta' = \tan^{-1}((k_2/k_1)\tan\theta)$. Thus, for $\theta \in [-\pi/2,\pi/2)$
\begin{align}
    \delta_{0,(k_1,k_2)}(X(r,\theta)) = X(r(k_1^2\cos^2\theta + k_2^2\sin^2\theta)^{1/2},\tan^{-1}((k_2/k_1)\tan\theta)). \label{asym_dil}
\end{align}
Then the asymmetric hyperbolic dilation about $c$ in $\mathbb{D}$ is given by $\delta_{c,(k_1,k_2)} = \tau_c \circ \delta_{0,(k_1,k_2)} \circ \tau^{-1}_c$. We refer to the asymmetric hyperbolic dilation of a point as asymmetric expansion when $k_1, k_2 \geqslant 1$. Note that it is straightforward to extend Eq. (\ref{asym_dil}) to the more general case when $\theta \in [-\pi,\pi)$ but that does not affect our result. 

\medskip

The main result of this paper is that \textit{asymmetric expansion of a h-convex set $C \subseteq \mathbb{D}$ about a point $c \in C$ preserves h-convexity i.e. $\delta_{c,(k_1,k_2)}(C)$ is h-convex when $k_1,k_2 \geq 1$}.

\medskip

We will make use of standard facts about hyperbolic convex sets which can be found in \cite{alexander, bishop, stoker}. Let $S$ be a connected set with nonempty interior whose boundary is a piecewise smooth curve. Then $S$ is h-convex iff it has a local support line at each boundary point. Here a local support line at a boundary point $p$ is a geodesic segment through $p$ such that all points of $S$ in a neighborhood of $p$ lie on the same side of the segment. Further, if $S$ is h-convex then the geodesic curvature of its oriented boundary has the same sign at all smooth points. If the boundary is oriented so that $k_g$ is nonnegative (but not identically zero) then the interior of $S$ is to the left. 

\section{Main Result and Proof}
\label{sec:result}
\begin{lemma}
    \label{lem0}
    Let $p,q$ be two distinct points in $\mathbb{D}$ such that $p,q \neq 0$. Let $\gamma$ and $\gamma'$ be two regular curves from $p$ to $q$ which lie in the region bounded by rays from $0$ towards $p$ and $q$ and whose geodesic curvatures do not change sign. Then, $\gamma$ and $\gamma'$ lie on the opposite sides of $[p,q]$ if and only if their geodesic curvatures have opposite signs.
\end{lemma}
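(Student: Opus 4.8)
The plan is to reduce the statement to a single-curve fact: a regular curve from $p$ to $q$ with non-sign-changing geodesic curvature lies entirely on one side of the geodesic $[p,q]$, and \emph{which} side is determined by the sign of its geodesic curvature (both curves being oriented from $p$ to $q$). Once this is shown to be the same bijection between signs $\{+,-\}$ and sides $\{\text{left},\text{right}\}$ for every admissible curve, the lemma is immediate: $\gamma$ and $\gamma'$ lie on opposite sides of $[p,q]$ precisely when their curvatures have opposite signs. We may assume neither curve has $k_g\equiv 0$, since then it would coincide with $[p,q]$ and lie on neither side.

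First I would record that the sector $W$ bounded by the rays from $0$ through $p$ and through $q$ is h-convex, being the intersection of the two hyperbolic half-planes bounded by the full geodesics (Euclidean diameters) carrying those rays. Hence $[p,q]\subset W$, and the oriented geodesic $g:=[p,q]$ cuts $W$ into a left component $W_L$ and a right component $W_R$; both $\gamma$ and $\gamma'$ live in $W$ by hypothesis, which is exactly the confinement that keeps the regions constructed below embedded topological disks so that Gauss--Bonnet applies.

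The analytic core is a Gauss--Bonnet computation. Suppose $\gamma$ meets $g$ only at $p,q$ and lies, apart from its endpoints, on the left; then $\gamma$ and $g$ bound an embedded bigon $R$. Orienting $\partial R$ so that $R$ is to its left, the segment $g$ contributes nothing to $\int_{\partial R}k_g\,ds$ (being a geodesic), while $\gamma$ is traversed from $q$ to $p$. With $K\equiv-1$, Gauss--Bonnet reads
\begin{equation}
-\mathrm{Area}(R)-\int_{\gamma}k_g\,ds+\varepsilon_p+\varepsilon_q=2\pi,
\end{equation}
where $\varepsilon_p,\varepsilon_q\in(0,\pi)$ are the interior angles of the bigon, so that $-\int_\gamma k_g\,ds=2\pi+\mathrm{Area}(R)-(\varepsilon_p+\varepsilon_q)>\mathrm{Area}(R)>0$. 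As $k_g$ does not change sign along $\gamma$, this forces $k_g<0$. Running the identical computation with $\gamma$ on the \emph{right} reverses the orientation in which $\gamma$ is traversed, flips the sign of its curvature contribution, and instead forces $\int_\gamma k_g\,ds>0$, hence $k_g>0$. Thus the side on which $\gamma$ lies is in bijective correspondence with $\mathrm{sign}(k_g)$, the same correspondence for $\gamma$ and $\gamma'$, which yields the lemma.

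The step I expect to be the main obstacle is justifying the standing assumption that $\gamma$ stays on one side, i.e. that it cannot cross $(p,q)$ at an interior point. I would obtain this from the very correspondence just established, applied to sub-arcs: a transversal crossing at an interior point $m$ would split $\gamma$ into an arc $p\to m$ and an arc $m\to q$ lying on opposite sides of $g$, and forming the two sub-bigons and applying the computation above to each would force $k_g$ to be negative on one sub-arc and positive on the other, contradicting the hypothesis that $k_g$ does not change sign. Making this rigorous requires verifying that these sub-bigons are embedded and have interior angles in the admissible range, which is precisely where the confinement to the convex wedge $W$ is used; tangential touches (where $\gamma$ meets $g$ without changing sides) cause no difficulty and leave $\gamma$ in one closed component.
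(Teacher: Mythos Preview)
Your argument is essentially correct but follows a genuinely different route from the paper's. The paper works through h-convexity: it forms the lune $C$ between $\gamma$ and $[p,q]$ and the lune $C'$ between $\gamma'$ and $[p,q]$, shows each is h-convex as an intersection of h-convex pieces of the sector, then proves $C\cup C'$ is h-convex by a direct geodesic-crossing argument; since the concatenation $\gamma\cdot(-\gamma')$ is then the oriented boundary of an h-convex region, its geodesic curvature has one sign throughout, which forces $k_g(\gamma)$ and $k_g(-\gamma')=-k_g(\gamma')$ to agree in sign. You instead extract the side--sign correspondence for a \emph{single} curve directly from Gauss--Bonnet on the bigon it bounds with $[p,q]$, and then compare the two curves. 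Your route is more self-contained (needing only Gauss--Bonnet) and yields the sharper single-curve statement; the paper's route leverages the local-support-line machinery already quoted in its introduction and sidesteps any angle bookkeeping. Both proofs are equally informal about $\gamma$ not meeting $[p,q]$ in its interior---the paper simply speaks of ``the region enclosed by $\gamma$ and $[p,q]$'' without addressing it---and in the paper's only application of the lemma the curves are monotone in the angular coordinate, so the issue is moot. One slip worth fixing: in your displayed Gauss--Bonnet identity the angles $\varepsilon_p,\varepsilon_q$ enter with a $+$ sign, which makes them \emph{exterior} (turning) angles rather than interior ones; the ensuing inequality $2\pi-(\varepsilon_p+\varepsilon_q)>0$ is correct under that reading.
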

\begin{proof}
    Suppose $\gamma$ and $\gamma'$ lie on opposite sides of $[p,q]$. Let $S$ be the section of $\mathbb{D}$ subtended by rays from $0$ towards $p$ and $q$. Clearly, $S$ is h-convex. Denote by $H_1$ and $H_2$ the two sets into which $[p,q]$ divides $S$. Both $H_1$ and $H_2$ are also h-convex. The curve $\gamma$, whose geodesic curvature does not change sign, also divides $S$ into two sets, one of which is h-convex. Clearly the set containing $[p,q]$ is h-convex and is denoted by $S_{\gamma}$. Let $C$ be the region enclosed by $\gamma$ and $[p,q]$. So $C$ is the intersection of $S_{\gamma}$ with either $H_1$ or $H_2$ (see Figure (\ref{fig1})). Since the intersection of h-convex sets is h-convex, therefore $C$ is h-convex. Using the same argument, the region $C'$ enclosed by $\gamma'$ and $[p,q]$ is also h-convex. We now show that $C \cup C'$ is also h-convex. It is sufficient to show that when $x \in C$ and $x' \in C'$ then $[x,x']$ lies in $C \cup C'$. Note that $[x,x']$ must intersect $[p,q]$; let the point of intersection be $w$. So $w$ lies in both $C$ and $C'$. Since $C$ and $C'$ are h-convex so $[x,w] \in C$ and $[w,x'] \in C'$. Therefore, $[x,x'] \in C \cup C'$. Since $\gamma - \gamma'$ (concatenation of $\gamma$ and reversed $\gamma'$) encloses $C \cup C'$ therefore the geodesic curvature of the smooth pieces of $\gamma - \gamma'$ (which are $\gamma$ and $-\gamma'$) do not change sign. So, the geodesic curvature of $\gamma$ and that of $-\gamma'$ have same sign. Since the geodesic curvatures of $\gamma'$ and $-\gamma'$ have opposite signs by definition, therefore, geodesic curvatures of $\gamma$ and $\gamma'$ have opposite signs.
    
    \medskip
    
    Now suppose $\gamma$ and $\gamma'$ lie on the same side of $[p,q]$. We show that their geodesic curvatures must have the same sign. Take a curve $\gamma''$ joining $p$ to $q$ on the other side of $[p,q]$ such that $\gamma''$ lies in the region bounded by rays originating from $0$ towards $p$ and $q$, and the geodesic curvature of $\gamma''$ does not change sign. From the argument above we conclude that geodesic curvatures of $\gamma''$ and $\gamma$ have opposite signs and the geodesic curvatures of $\gamma''$ and $\gamma'$ have opposite signs. So, the geodesic curvatures of $\gamma$ and $\gamma'$ must have the same sign.
    
    \begin{figure*}[h]
    \centering
    \includegraphics[width=0.4\textwidth]{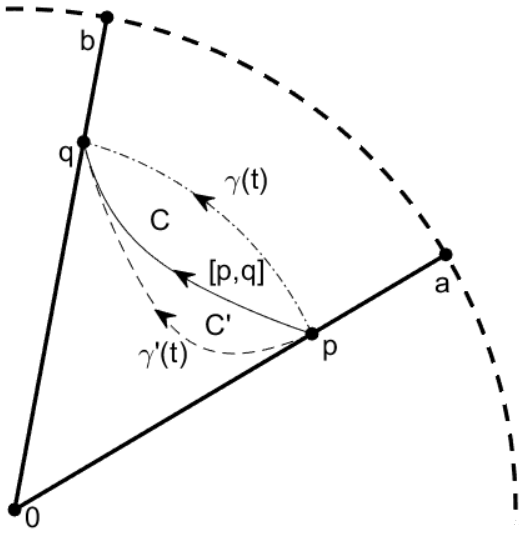}
    \caption{The sets $S$, $H_1$, $H_2$, $S_{\gamma}$, $S_{\gamma'}$ are represented by the regions enclosed by closed curves $[0a]\widetilde{ab}[b0]$, $[0p][pq][q0]$, $[pq][qb]\widetilde{ba}[ap]$, $[0p]\gamma[q0]$ and $[ap]\gamma'[qb]\widetilde{ba}$ respectively, where $\widetilde{ab}$ represent the dashed-circular arc from $a$ to $b$. The sets $C$ and $C'$ are shown in the figure.}
    \label{fig1}
    \end{figure*}
\end{proof}

We also make use of the following standard formula for geodesic curvature, which can be found in \cite{joprea} (in the case $v=1$) and in \cite{banchoff} (in terms of Christoffel symbols).

\begin{lemma}
    \label{lem01}
    Let $x(t)=X(r(t),\theta(t))$ be a regular curve in $\mathbb{D}$. Let $v(t)$ be its velocity at $x(t)$. Then the geodesic curvature at $x(t)$ is given by
    \begin{align}
        k_g &= \frac{1}{v^3}\sqrt{EG}\left(\frac{G_r}{G}r'^2\theta' +\frac{G_r}{2E}\theta'^3 +r'\theta'' -r''\theta' \right) \label{k_g_formula}
    \end{align}
    where $E(t) = 1$, $G(t) = \sinh^2r(t)$ and $G_r(t)=\sinh 2 r(t)$.
\end{lemma}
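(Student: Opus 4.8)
The plan is to derive the formula directly from the intrinsic definition of signed geodesic curvature and then specialize to the metric $ds^2 = dr^2 + \sinh^2 r\, d\theta^2$, which is exactly the content of the cited formulas in \cite{joprea} and \cite{banchoff}. Recall that if $T = x'/v$ is the unit tangent along $x(t)$, parametrized by arc length $s$ with $ds = v\, dt$, then $k_g = \langle \tfrac{DT}{ds}, JT\rangle$, where $\tfrac{D}{ds}$ is the covariant derivative along the curve and $J$ is rotation by $+\pi/2$ in the oriented tangent plane. The first step is to reduce this to an expression involving only the covariant acceleration. Writing $T = x'/v$ and differentiating, $\tfrac{DT}{ds} = v^{-1}\tfrac{D}{dt}(x'/v) = v^{-2}\nabla_{x'}x' - v^{-3}v'\,x'$. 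The second term is a scalar multiple of $x'$, hence of $T$, and is annihilated upon taking the inner product with $JT \perp T$. This leaves $k_g = v^{-2}\langle \nabla_{x'}x', JT\rangle = v^{-3}\langle \nabla_{x'}x', Jx'\rangle$.

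Next I would compute the two factors in coordinates. With $(u^1,u^2) = (r,\theta)$, the covariant acceleration has components $a^k = (u^k)'' + \Gamma^k_{ij}(u^i)'(u^j)'$, so that $\nabla_{x'}x' = a^1\partial_r + a^2\partial_\theta$. Because the parametrization is orthogonal ($F=0$), the frame $e_1 = \partial_r$, $e_2 = \partial_\theta/\sqrt{G}$ is orthonormal; writing $x' = r'e_1 + \theta'\sqrt{G}\,e_2$ and using $Je_1 = e_2$, $Je_2 = -e_1$ gives $Jx' = -\theta'\sqrt{G}\,e_1 + r'e_2$. Taking the inner product of $\nabla_{x'}x' = a^1 e_1 + a^2\sqrt{G}\,e_2$ with $Jx'$ yields the determinant-type expression $k_g = \tfrac{\sqrt{EG}}{v^3}\,(r'a^2 - \theta' a^1)$, where I have used $E = 1$ so that $\sqrt{G} = \sqrt{EG}$.

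Finally I would insert the Christoffel symbols for the metric at hand. Since $E = 1$, $F = 0$, $G = \sinh^2 r$ with $G_\theta = 0$, every symbol vanishes except $\Gamma^2_{12} = G_r/(2G)$ and $\Gamma^1_{22} = -G_r/2$, whence $a^1 = r'' - \tfrac{G_r}{2}\theta'^2$ and $a^2 = \theta'' + \tfrac{G_r}{G}r'\theta'$. Substituting into $r'a^2 - \theta' a^1$ and collecting terms gives exactly $\tfrac{G_r}{G}r'^2\theta' + \tfrac{G_r}{2E}\theta'^3 + r'\theta'' - r''\theta'$ inside the parentheses (using $E=1$ to rewrite $\tfrac{G_r}{2}$ as $\tfrac{G_r}{2E}$), which is the claimed formula, with $G_r = \sinh 2r$.

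The main obstacle is purely bookkeeping: fixing the orientation of $J$ consistently so that the surviving determinant is $r'a^2 - \theta'a^1$ rather than its negative, and verifying that the two nonvanishing Christoffel symbols reproduce precisely the two curvature terms $\tfrac{G_r}{G}r'^2\theta'$ and $\tfrac{G_r}{2E}\theta'^3$. Once the sign conventions are pinned down, every step is routine, and the resulting expression reduces in the arc-length case $v=1$ to the formula stated in \cite{joprea}.
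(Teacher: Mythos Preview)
Your derivation is correct and complete: the reduction $k_g = v^{-3}\langle \nabla_{x'}x', Jx'\rangle$, the computation $\langle \nabla_{x'}x', Jx'\rangle = \sqrt{EG}\,(r'a^2 - \theta'a^1)$ via the orthonormal frame, and the substitution of the two nonzero Christoffel symbols $\Gamma^1_{22} = -G_r/2$, $\Gamma^2_{12} = G_r/(2G)$ all check out and reproduce the stated expression with the intended sign.

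However, there is nothing to compare against. The paper does not prove this lemma; it simply records it as a standard formula and points to \cite{joprea} (the unit-speed case) and \cite{banchoff} (the Christoffel-symbol version) for justification. Your write-up is therefore strictly more than what the paper offers --- it supplies the actual argument that the paper outsources to the literature. If anything, your sketch makes transparent why the $1/v^3$ factor and the particular combination $r'a^2 - \theta'a^1$ appear, which is helpful context the citations leave implicit.
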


\begin{theorem}
    \label{thm1}
    Consider a h-convex set $C \subseteq \mathbb{D}$ and a point $c \in C$. Then for any $k_1,k_2\geqslant 1$, $\delta_{c,(k_1,k_2)}(C)$ is h-convex.
\end{theorem}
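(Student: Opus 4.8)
The plan is to reduce to the case $c=0$ and then to show directly that $\delta_{0,(k_1,k_2)}(C)$ contains the geodesic segment joining any two of its points. Since h-convexity is preserved by the isometry $\tau_c$ (Definition \ref{rhconvex}) and $\delta_{c,(k_1,k_2)}=\tau_c\circ\delta_{0,(k_1,k_2)}\circ\tau_c^{-1}$, and since $\tau_c$ takes $0$ to $c$ so that $C':=\tau_c^{-1}(C)$ is h-convex and contains $0$, it suffices to prove: if $C$ is h-convex with $0\in C$, then $\delta_{0,(k_1,k_2)}(C)$ is h-convex. Writing $s(\theta)=(k_1^2\cos^2\theta+k_2^2\sin^2\theta)^{1/2}$ and $g(\theta)=\tan^{-1}((k_2/k_1)\tan\theta)$, the map acts by $X(r,\theta)\mapsto X(r\,s(\theta),g(\theta))$ in geodesic polar coordinates, so it carries each radial ray to a radial ray and is monotone in $r$ along it; hence $\delta_{0,(k_1,k_2)}(C)$ is again star-shaped about $0$, and $g$ is a monotone reparameterization of the angle.

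A convenient simplification is to factor the map. Taking $k_1\le k_2$ without loss of generality, one checks from \eqref{asym_dil} that $\delta_{0,(k_1,k_2)}=\delta_{0,(k_1,k_1)}\circ\delta_{0,(1,k_2/k_1)}$, where the first factor is the symmetric expansion by $k_1\ge 1$. As that factor preserves h-convexity by the main theorem of \cite{us}, it is enough to treat the purely asymmetric map $\delta_{0,(1,k)}$ with $k=k_2/k_1\ge 1$. This removes one parameter from all subsequent computations.

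The geometric heart is the following claim about a single chord: if $[p,q]$ is a geodesic segment with $0\notin[p,q]$, then its image $\delta_{0,(1,k)}([p,q])$ lies on the far side of the geodesic $[\delta_{0,(1,k)}(p),\delta_{0,(1,k)}(q)]$ from $0$. Both the chord and the image curve lie in the pie-shaped region bounded by the rays from $0$ to the images of $p$ and $q$ (using monotonicity of $g$), so Lemma \ref{lem0} applies: it is enough to show that the geodesic curvature of $\delta_{0,(1,k)}([p,q])$ does not change sign and to identify that sign. I would parameterize the geodesic in $(r,\theta)$ coordinates as an arc of a Euclidean circle orthogonal to $\partial\mathbb{D}$, substitute $r'=r\,s(\theta)$ and $\theta'=g(\theta)$, and feed $r'(t),\theta'(t)$ into the curvature formula of Lemma \ref{lem01}; evaluating the sign at one convenient point (or via a limiting geodesic) then pins down the direction of the bulge.

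Granting the claim, convexity follows quickly. For $a,b\in\delta_{0,(1,k)}(C)$, write $a=\delta_{0,(1,k)}(p)$ and $b=\delta_{0,(1,k)}(q)$ with $p,q\in C$; then $[p,q]\subseteq C$, so the image curve $\delta_{0,(1,k)}([p,q])$ lies in $\delta_{0,(1,k)}(C)$. By the claim this curve is, at every angle of the common pie region, at least as far from $0$ as the geodesic $[a,b]$; since $\delta_{0,(1,k)}(C)$ is star-shaped about $0$, the radial segment to each point of the image curve lies in the set, and hence so does the corresponding point of $[a,b]$. Thus $[a,b]\subseteq\delta_{0,(1,k)}(C)$. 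The main obstacle is the sign analysis of the previous paragraph: unlike the symmetric case of \cite{us}, where $s$ is constant and $g$ is the identity, here both the radial rescaling $s(\theta)$ and the angular distortion $g(\theta)$ vary along the chord and couple in \eqref{k_g_formula}, so showing that the resulting geodesic curvature keeps a constant sign for every admissible chord is the one genuinely delicate computation.
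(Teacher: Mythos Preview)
Your outline is essentially the paper's strategy in dual form: the paper pulls the geodesic $[\hat x_1,\hat x_2]\subset C'$ back to a curve $x(t)$ in $C$ and shows $x(t)$ lies on the $0$-side of $[x_1,x_2]$, whereas you push a geodesic $[p,q]\subset C$ forward and want the image to lie on the far side of $[a,b]$ from $0$. Since $\delta_{0,(1,k)}$ is radial and monotone in $r$, these two statements are equivalent, and your star-shapedness argument at the end is a valid replacement for the paper's triangle argument. Your factorization $\delta_{0,(k_1,k_2)}=\delta_{0,(k_1,k_1)}\circ\delta_{0,(1,k_2/k_1)}$ is a legitimate variant of the paper's $\delta_{0,(k_1,1)}\circ\delta_{0,(1,k_2)}$; yours invokes the symmetric result of \cite{us}, the paper's is self-contained.

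The genuine gap is that the step you correctly flag as ``the one genuinely delicate computation'' is essentially the entire proof, and your plan for it is not enough. Evaluating the curvature at one point does not establish that it keeps a fixed sign along the whole chord; the paper has to work hard for this. Concretely, after substituting into \eqref{k_g_formula} the paper obtains $k_g=P_0\bigl(P_1\hat r'^2+P_2\hat r'\Delta\hat\theta+P_3(\Delta\hat\theta)^2\bigr)$ with $P_0>0$, and then proves $P_1<0$ and the discriminant $P_2^2-4P_1P_3<0$ via two nontrivial hyperbolic-function inequalities (Lemmas~\ref{lem1} and \ref{lem4}); only this forces $k_g<0$ for every $t$ and every admissible chord. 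You will need an argument of comparable strength. Second, Lemma~\ref{lem0} does not by itself convert a curvature sign into a side of $[a,b]$: it compares two curves. The paper manufactures a comparison curve $\gamma(t)=X((1-t)r_1+tr_2,(1-t)\theta_1+t\theta_2)$, checks directly that it has positive geodesic curvature and lies on the far side of $[x_1,x_2]$ from $0$, and then invokes Lemma~\ref{lem0} against $x(t)$. Your proposal omits any such reference curve, so the appeal to Lemma~\ref{lem0} is currently incomplete.
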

\begin{proof}
    Because of the invariance of h-convexity and asymmetric dilations under isometries of the hyperbolic metric, there is no loss of generality in assuming that $c = 0$. Also, there is no loss of generality in assuming that $k_2 = 1$ because
    \begin{align}
        \delta_{0,(k_1,k_2)}(X(r,\theta)) = \delta_{0,(k_1,1)}(\delta_{0,(1,k_2)}(X(r,\theta))), 
    \end{align}
    as can be checked by direct calculation.
    Finally, the case $k_1=k_2=1$ is trivial. So, we prove that for any $k > 1$, $\delta_{0,(k,1)}(C)$ is h-convex when $C$ is h-convex and $0\in C$. Denote $\delta_{0,(k,1)}(C)$ by $C'$.

    \medskip
    
    The following is an outline of the proof. We take two arbitrary points $\hat{x}_1,\hat{x}_2 \in C'$. We then parameterize $[\hat{x}_1,\hat{x}_2]$ using $\hat{x}(t)$ where $t \in [0,1]$, $\hat{x}(0) = \hat{x}_1$ and $\hat{x}(1)=\hat{x}_2$. To show that $C'$ is h-convex, we must show that $\hat{x}(t) \in C'$ for all $t \in [0,1]$. We compute the preimages of $\hat{x}_1, \hat{x}_2$ and $\hat{x}(t)$ under the map $\delta_{0,(k,1)}$. We call these preimages $x_1, x_2$ and $x(t)$. Since $\hat{x}_1, \hat{x}_2 \in C'$, therefore $x_1,x_2 \in C$. We must show that $x(t) \in C$ for all $t \in [0,1]$ which will then prove that $\hat{x}(t) = \delta_{0,(k,1)}(x(t)) \in C'$. To show that $x(t) \in C$, we first note that $x(t)$ lies in the region between the rays originating from $0$ towards $x_1$ and $x_2$ respectively. Then we show that $x(t)$ has negative geodesic curvature. We will construct a curve $\gamma(t)$ from $x_1$ to $x_2$ which lies on the side of $[x_1,x_2]$ opposite to $0$ and has positive geodesic curvature. Then using Lemma (\ref{lem0}) we conclude that $x(t)$ and $\gamma(t)$ lie on opposite sides of $[x_1,x_2]$ so that $x(t)$ lies on the side of $[x_1,x_2]$ where $0$ lies. Finally, using the facts that $x_1,x_2$ and $0$ lie in $C$ and $C$ is h-convex we conclude that $x(t) \in C$ and therefore $C'$ is h-convex.

    \medskip

    For conciseness we make use of the following functions when required
    \begin{align}
    \phi(a) &= \sinh a - a, \\
    \psi(a) &= a \coth a - 1.
    \end{align}

    \medskip 

    For convenience, denote $s = 1/k$, so $s \in (0,1)$. Consider two points $\hat{x}_1, \hat{x}_2 \in C'$ as follows,
    \begin{align}
    \hat{x}_1 &= X(\hat{r}_1,\hat{\theta}_1),\\
    \hat{x}_2 &= X(\hat{r}_2,\hat{\theta}_2),
    \end{align}
    where $\hat{r}_1, \hat{r}_2 > 0$. Without loss of generality, assume that $-\pi/2 \leqslant  \hat{\theta}_1 < \hat{\theta}_2 < \pi/2$. Let $\hat{x}(t)$ be the curve representing $[\hat{x}_1,\hat{x}_2]$ where $t\in [0,1]$. Then
    \begin{align}
    \hat{x}(t) &= X(\hat{r}(t),\hat{\theta}(t)), 
    \end{align}
    where $\hat{\theta}(t) = (1-t)\hat{\theta}_1 + t\hat{\theta}_2$. Denote $\Delta\hat{\theta} = \hat{\theta}_2-\hat{\theta}_1$. Note that $\hat{\theta}(t) \in [-\pi/2,\pi/2)$ and $\Delta\hat{\theta} \in (0,\pi)$. Also, using \cite[eq.~9]{us},
    \begin{align}
    \hat{r}(t) = \coth^{-1}\left(\frac{\coth \hat{r}_1 \sin((1-t)\Delta\hat{\theta}) + \coth \hat{r}_2 \sin(t\Delta\hat{\theta})}{\sin\Delta\hat{\theta}}\right). 
    \end{align}
    
    Using the inverse of $\delta_{0,(k,1)}$, we obtain $x_1, x_2, x(t)$ from $\hat{x}_1, \hat{x}_2, \hat{x}(t)$ as
    \begin{align}
    x_1 &= X(r_1,\theta_1)\nonumber\\
    &= X(\hat{r}_1(s^2\cos^2\hat{\theta}_1 + \sin^2\hat{\theta}_1)^{1/2},\tan^{-1}((1/s)\tan\hat{\theta}_1)),\\
    x_2 &= X(r_2,\theta_2)\nonumber\\
    &= X(\hat{r}_2(s^2\cos^2\hat{\theta}_2 + \sin^2\hat{\theta}_2)^{1/2},\tan^{-1}((1/s)\tan\hat{\theta}_2)),\\
    x(t) &= X(r(t), \theta(t)),
    \end{align}
    where
    \begin{align}
    r(t) &= \hat{r}(t)\sqrt{\beta(t)},\\
    \theta(t) &= \tan^{-1}((1/s)\tan\hat{\theta}(t)), \\
    \beta(t) &= s^2\cos^2\hat{\theta}(t) + \sin^2\hat{\theta}(t). 
    \end{align}
    Note that $\theta(t) \in [\theta_1,\theta_2] \subseteq [-\pi/2,\pi/2)$ and $\theta(0)=\theta_1$, $\theta(1)=\theta_2$, $r(0)=r_1$ and $r(1)=r_2$. So, $x(t)$ joins $x_1$ to $x_2$ and lies in the region between the rays from $0$ towards $x_1$ and $x_2$.
    
    \medskip

    Let $x_0(t)$ be the curve representing $[x_1,x_2]$ where $t \in [0,1]$. Then
    \begin{align}
        x_0(t)&=X(r_0(t),\theta_0(t))
    \end{align}
    where $\theta_0(t)=(1-t)\theta_1 + t\theta_2$ and $\Delta\theta=\theta_2-\theta_1$. Note that $\theta_0(t) \in [-\pi/2,\pi/2)$ and $\Delta\theta \in (0,\pi)$. Also, using \cite[eq.~9]{us},
    \begin{align}
    r_0(t) = \coth^{-1}\left(\frac{\coth r_1 \sin(t\Delta\theta) + \coth r_2 \sin((1-t)\Delta\theta)}{\sin\Delta\theta}\right). 
    \end{align}

    \medskip

    Now we show that $x(t)$ lies on the side of $[x_1,x_2]$ where $0$ lies. First, we show that the geodesic curvature $\kappa_g(t)$ of $x(t)$ is negative for all $t \in [0,1]$. Let $v(t)$ be the velocity of the curve $x(t)$. We will require the following to compute $k_g(t)$. We suppress the arguments $t$ for brevity and use primes to denote derivatives with respect to $t$. Clearly, $\hat{\theta}' = \Delta\hat{\theta}$, $\beta - s^2 = (1-s^2)\sin^2\hat{\theta} > 0$ and $1-\beta = (1-s^2)\cos^2\hat{\theta} > 0$. Also, we have
    \begin{align}
        \hat{r}' &= \frac{\Delta\hat{\theta} (\sinh^2 \hat{r})(\coth \hat{r}_1 \cos((1-t)\Delta\hat{\theta}) -\coth \hat{r}_2 \cos(t\Delta\hat{\theta}))}{\sin\Delta\hat{\theta}},\\
        \hat{r}'' &= 2\hat{r}'^2\coth \hat{r} + \Delta\hat{\theta}^2 \sinh(2\hat{r})/2, \\
        r' &= \hat{r}'\sqrt{\beta} + \hat{r}\beta'/2\sqrt{\beta}, \\
        r'' &= \hat{r}''\sqrt{\beta} + \frac{\hat{r}'\beta'}{\sqrt{\beta}} + \frac{\hat{r}}{2}\left(\frac{\beta''\sqrt{\beta}-\beta'^2/2\sqrt{\beta}}{\beta}\right), \\
        \beta' &= (1-s^2)\Delta\hat{\theta} \sin 2\hat{\theta}, \\
        \beta'^2 &= 4(\beta-s^2)(1-\beta)(\Delta\hat{\theta})^2, \\
        \beta'' &= 2(1-s^2)(\Delta\hat{\theta})^2\cos 2\hat{\theta}, \\
        \theta' &= s\Delta\hat{\theta}/\beta, \\
        \theta'' &= -\theta'\beta'/\beta. 
    \end{align}
    \begin{lemma}
    \label{k_g_x}
        Substituting the above equations in Eq. (\ref{k_g_formula}) we obtain 
        \begin{align}
            k_g &= P_0(P_1\hat{r}'^2 + P_2\hat{r}'\Delta\hat{\theta} + P_3(\Delta\hat{\theta})^2) \label{k_g}
        \end{align}
        where
        \begin{align}
            P_0 &= \frac{1}{v^3}\frac{s\Delta\hat{\theta}}{\beta}\sinh r,\\
            P_1 &= 2\sqrt{\beta}\ \frac{\psi(\hat{r}\sqrt{\beta})-\psi(\hat{r})}{\hat{r}},\\
            P_2^2 &= 16(\beta-s^2)(1-\beta)\frac{[\psi(\hat{r}\sqrt{\beta})]^2}{\beta},\\
            P_3 &= \frac{1}{2\beta\sqrt{\beta}}\left(\frac{s^2}{\sqrt{\beta}}\phi(2\hat{r}\sqrt{\beta}) - \beta^2\phi(2\hat{r}) +4\hat{r}(\beta-s^2)(1-\beta)\psi(\hat{r}\sqrt{\beta})\right).
        \end{align}
    \end{lemma}
     The proof is in the appendix. 
     
     \medskip
    
    We now show that for all $t \in [0,1]$, $P_0 > 0$ and $P_1\hat{r}'^2 + P_2\hat{r}\Delta\theta + P_3(\Delta\hat{\theta})^2 < 0$. The latter is done by showing that $P_1 < 0$ and the discriminant $P_2^2-4P_1P_3<0$. It follows that $k_g(t) < 0$ for all $t \in [0,1]$ as claimed.

    \medskip

    Clearly, $P_0 > 0$ for all $t \in [0,1]$. Since $\psi(a)$ is increasing in $a$ when $a > 0$ we have
    \begin{align}
        \psi(\hat{r}\sqrt{\beta})-\psi(\hat{r}) < 0 \label{eq1}
    \end{align}
    and therefore $P_1 < 0$.
    
    \medskip

    Finally, we show that $P_2^2-4P_1P_3 < 0$. Note that
    \begin{align}
        P_2^2-4P_1P_3 &= \frac{4}{\beta}\left\{\frac{\psi(\hat{r})-\psi(\hat{r}\sqrt{\beta})}{\hat{r}}\left(\frac{s^2}{\sqrt{\beta}}\phi(2\hat{r}\sqrt{\beta})-\beta^2\phi(2\hat{r})\right)\right.\nonumber \\
        &\ \ \qquad\qquad \left.+4(\beta-s^2)(1-\beta)\psi(\hat{r}\sqrt{\beta})\psi(\hat{r})\right\}. \label{eq2}
    \end{align}
    We need the following lemmas which are proved in the appendix.
    \begin{lemma}
    \label{lem1}
        For all $x > 0$ and $y \in (0,1)$
        {
        \begin{align}
            \sinh xy -xy < y^3(\sinh x -x). 
        \end{align}
        }%
    \end{lemma}
    Using $x=2\hat{r}$ and $y = \sqrt{\beta}$ in this lemma, we obtain
    \begin{align}
        \phi(2\hat{r}\sqrt{\beta}) < \beta\sqrt{\beta}\phi(2\hat{r}). \label{eq3}
    \end{align}
    \begin{lemma}
    \label{lem4}
        For all $x > 0$ and $y \in (0,1)$
        {
        \begin{align}
            \frac{(xy\coth xy -1)(x\coth x-1)}{\coth x - y\coth xy} < \frac{y^2}{4(1-y^2)}(\sinh 2x -2x). 
        \end{align}
        }%
    \end{lemma}
    Using $x = \hat{r}$ and $y=\sqrt{\beta}$ in this lemma, we obtain
    \begin{align}
        \frac{\psi(\hat{r}\sqrt{\beta})\psi(\hat{r})}{(\psi(\hat{r}) - \psi(\hat{r}\sqrt{\beta}))/\hat{r}} < \frac{\beta}{4(1-\beta)}\phi(2\hat{r}).
    \end{align}
    Using inequality (\ref{eq1}) we rewrite this inequality as
    \begin{align}
        \psi(\hat{r}\sqrt{\beta})\psi(\hat{r}) < \frac{\beta}{4(1-\beta)}\frac{\psi(\hat{r}) - \psi(\hat{r}\sqrt{\beta})}{\hat{r}}\phi(2\hat{r}). \label{eq4}
    \end{align}
    Substituting inequality (\ref{eq3}) in (\ref{eq2}), factoring out $4(\beta-s^2)(1-\beta)$ and then using inequality (\ref{eq4}) we obtain
    \begin{align}
    &P_2^2 - 4P_1P_3 \nonumber \\
    &\ \ < \underbrace{\frac{16(\beta-s^2)(1-\beta)}{\beta}}_{>0}\underbrace{\left(\psi(\hat{r}\sqrt{\beta})\psi(\hat{r}) -\frac{\beta}{4(1-\beta)}\frac{\psi(\hat{r})-\psi(\hat{r}\sqrt{\beta})}{\hat{r}}\phi(2\hat{r})\right)}_{<0} \nonumber\\
    &\ \ < 0 \label{eq5}
    \end{align}
    So $\kappa_g(t) < 0$ for all $t \in [0,1]$.

    \medskip

    Now, consider the curve $\gamma(t)$ parameterized by $r_{\gamma}(t)=(1-t) r_1 + tr_2$ and $\theta_{\gamma}(t) = (1-t) \theta_1 + t \theta_2$. Clearly $\gamma(t)$ runs from $x_1$ to $x_2$ and lies in the region bounded by the rays from $0$ towards $x_1$ and $x_2$. Using the fact that $\coth a$ is strictly convex when $a>0$ and the following lemma proved in the appendix (and in \cite{klen})
    \begin{lemma}
    \label{lem12}
        For all $x \in (0,\pi)$ and $y \in [0,1]$
        {
        \begin{align}
            \sin xy \geqslant y\sin x. 
        \end{align}
        }%
    \end{lemma}
    we obtain
    \begin{align}
        \coth r_{\gamma}(t) &= \coth((1-t) r_1 + t r_2)\\
        &< (1-t)\coth r_1 +t\coth r_2\\
        &< \frac{\coth r_1 \sin((1-t)\Delta\theta) + \coth r_2\sin(t\Delta\theta)}{\sin \Delta\theta}\\
        &=\coth r_0(t).
    \end{align}
    Since $\coth a$ is decreasing when $a > 0$ we have $r_{\gamma}(t) > r_0(t)$. Therefore, $\gamma(t)$ lies on the side of $[x_1,x_2]$ opposite to $0$. Let $v_{\gamma}(t)$ be the velocity of the curve $\gamma(t)$. Also, note that on $\gamma$, $E = 1$, $ G = \sinh^2r_{\gamma}$, $G_r = \sinh 2r_{\gamma} $, 
    \begin{align}
        r_{\gamma}' &= r_2-r_1 = \Delta r,\\
        r_{\gamma}'' &= 0,\\
        \theta_{\gamma}' &= \theta_2-\theta_1 = \Delta\theta \in (0,\pi),\\
        \theta_{\gamma}'' &= 0.
    \end{align}
    Substituting in Eq. (\ref{k_g_formula}) we obtain the geodesic curvature of $\gamma$ as
    \begin{align}
    \frac{1}{v_{\gamma}^3}\Delta\theta\sinh 2r_{\gamma} \left(2\coth r_{\gamma} \Delta r^2 + \frac{\sinh 2r_{\gamma} }{2}\Delta\theta^2\right) > 0.
    \end{align}
    
    So, $x(t)$ and $\gamma(t)$ are two curves from $x_1$ to $x_2$ where $x_1$ and $x_2$ are distinct and $x_1,x_2\neq 0$. These curves lie in the region bounded by rays originating from $0$ towards $x_1$ and $x_2$, and their geodesic curvatures do not change sign. The geodesic curvature of $\gamma(t)$ is positive and that of $x(t)$ is negative. Since $\gamma(t)$ lies on the side of $[x_1,x_2]$ opposite to $0$ therefore, using Lemma (\ref{lem0}), we conclude that $x(t)$ lies on the side of $[x_1,x_2]$ where $0$ lies. Finally, using the facts that $x_1,x_2,0 \in C$ and $C$ is h-convex, we conclude that $x(t) \in C$ for all $t \in [0,1]$ and therefore $C'$ is h-convex.
\end{proof}

\section{Conclusion and future work}
In this work we showed that the asymmetric expansion of a hyperbolic convex set in the Poincar\'e disk about a point inside it results in a hyperbolic convex set. Similar examples as in our previous work \cite{us} would show that the alternate cases of asymmetric dilation specifically when $k_1 < 1$ and $k_2 \geqslant 1$ may not preserve hyperbolic convexity. In our previous work \cite{us} we also studied symmetric contraction of convex sets in spherical geometry. Based on the data from computer experiments we conjecture that asymmetric spherical contraction of a spherical convex set about a point inside it such that the set is contained in the hemisphere centered at the point, preserves spherical convexity. We aim to prove this in our future work.

\section{Conflict of interest statement}
On behalf of all authors, the corresponding author states that there is no conflict of interest.

\begin{acknowledgements}
This is a pre-print of an article published in Journal of Geometry. The final authenticated version is available online at: \href{https://doi.org/10.1007/s00022-020-00545-4}{https://doi.org/10.1007/s00022-020-00545-4}.
\end{acknowledgements}



\appendix\normalsize
\section*{Appendix}

\textbf{Proof of Lemma (\ref{k_g_x})} Since $\theta' = s\Delta\hat{\theta}/\beta > 0$, we rewrite Eq. (\ref{k_g_formula}) as
\begin{align}
    k_g &= \frac{1}{v^3}\theta'\sqrt{EG}\left(\frac{G_r}{G}r'^2 +\frac{G_r}{2E}\theta'^2 +\frac{r'\theta''}{\theta'} -r'' \right). \label{k_g_formula2}
\end{align}

We denote the term outside the bracket as $P_0$. So,
\begin{align}
    P_0 &= \frac{1}{v^3}\theta'\sqrt{EG} = \frac{1}{v^3}\frac{s\Delta\hat{\theta}}{\beta}\sinh r.
\end{align}

Now, we write each term in the bracket separately and assign labels so as to group the terms which contain $\hat{r}'^2$, $\hat{r}'\Delta\hat{\theta}$ and $(\Delta\hat{\theta})^2$. Note that $\beta'$ contains $\Delta\hat{\theta}$ and $\beta''$ contains $(\Delta\hat{\theta})^2$.
\begin{align}
    \frac{G_r}{G}r'^2 &= 2\coth(\hat{r}\sqrt{\beta})(\underbrace{\hat{r}'^2\beta}_\text{(i)} + \underbrace{\frac{\hat{r}^2\beta'^2}{4\beta}}_\text{(iii)}+\underbrace{\hat{r}\hat{r}'\beta'}_\text{(ii)}),\\
    \frac{G_r}{2E}\theta'^2 &= \underbrace{\frac{\sinh 2\hat{r}\sqrt{\beta}}{2}\frac{s^2(\Delta\hat{\theta})^2}{\beta^2}}_\text{(iii)},\\
    \frac{r'\theta''}{\theta'} &= (\hat{r}'\sqrt{\beta}+\frac{\hat{r}\beta'}{2\sqrt{\beta}})\left(-\frac{\beta'}{\beta}\right) = -\underbrace{\frac{\hat{r}'\beta'}{\sqrt{\beta}}}_\text{(ii)} - \underbrace{\frac{\hat{r}\beta'^2}{2\beta\sqrt{\beta}}}_\text{(iii)},\\
    r'' &= \left(\underbrace{2\hat{r}'^2\coth \hat{r}}_\text{(i)} + \underbrace{(\Delta\hat{\theta})^2\frac{\sinh 2\hat{r} }{2}}_\text{(iii)}\right)\sqrt{\beta} + \underbrace{\frac{\hat{r}'\beta'}{\sqrt{\beta}}}_\text{(ii)},\nonumber\\
    & \ \ \qquad + \underbrace{\frac{\hat{r}}{2}\left(\frac{\beta''\sqrt{\beta}-\beta'^2/2\sqrt{\beta}}{\beta}\right)}_\text{(iii)}.
\end{align}

We now combine the terms having the same labels. From the terms with labels (i), (ii) and (iii) we obtain $P_1\hat{r}'^2$, $P_2\hat{r}'\Delta\hat{\theta}'$ and $P_3(\Delta\hat{\theta}')^2$ respectively. So,
\begin{align}
    P_1 &= 2\sqrt{\beta}(\sqrt{\beta}\coth\hat{r}\sqrt{\beta}-\coth\hat{r})\\
    &=2\sqrt{\beta}\ \frac{\psi(\hat{r}\sqrt{\beta})-\psi(\hat{r})}{\hat{r}},\\
    P_2^2 &= 4\left(\frac{\beta'}{\Delta\hat{\theta}}\right)^2\left(\hat{r}\coth\hat{r}\sqrt{\beta} - \frac{1}{\sqrt{\beta}}\right)^2\\
    &= 16(\beta-s^2)(1-\beta)(\hat{r}\coth\hat{r}\sqrt{\beta}-1/\sqrt{\beta})^2\\
    &= 16(\beta-s^2)(1-\beta)\frac{[\psi(\hat{r}\sqrt{\beta})]^2}{\beta},
\end{align}
\begin{align}
    P_3 &= \frac{s^2}{2\beta^2}\sinh 2\hat{r}\sqrt{\beta} - \frac{\sinh 2\hat{r}}{2}\sqrt{\beta} - \frac{\hat{r}\beta''}{2(\Delta\hat{\theta})^2\sqrt{\beta}} \nonumber\\
    & \qquad +\frac{\hat{r}\beta'^2}{2(\Delta\hat{\theta})^2\beta}(\hat{r}\coth\hat{r}\sqrt{\beta}-\frac{1}{2\sqrt{\beta}})\\
    &= \frac{1}{2\beta\sqrt{\beta}}\left(\underbrace{\frac{s^2}{\sqrt{\beta}}\sinh 2\hat{r}\sqrt{\beta}}_\text{(a)} - \underbrace{\beta^2\sinh 2\hat{r}}_\text{(b)} - \underbrace{2\hat{r}\beta(1-s^2)\cos 2\hat{\theta}}_\text{(d)}\right. \nonumber\\
    & \qquad\qquad\qquad +\left.4\hat{r}(\beta-s^2)(1-\beta)(\hat{r}\sqrt{\beta}\coth\hat{r}\sqrt{\beta}-\underbrace{1/2}_\text{(c)})\right). \label{eqP_3}
\end{align}
We rewrite term $\text{(d)}$ in Eq. (\ref{eqP_3}) as
\begin{align}
    2\hat{r}\beta(1-s^2)\cos 2\hat{\theta} &= \underbrace{2\hat{r}s^2}_\text{(a)} - \underbrace{2\hat{r}\beta^2}_\text{(b)} + \underbrace{2\hat{r}(1-\beta)(\beta-s^2)}_\text{(c)} 
\end{align}
and substitute it back in Eq. (\ref{eqP_3}) while combining the terms with the same labels to get
\begin{align}
    P_3 &= \frac{1}{2\beta\sqrt{\beta}}\left(\frac{s^2}{\sqrt{\beta}}(\sinh2\hat{r}\sqrt{\beta}-2\hat{r}\sqrt{\beta})- \beta^2(\sinh 2\hat{r}-2\hat{r}) \right.  \nonumber\\
    & \qquad\qquad\qquad + \left. 4\hat{r}(\beta-s^2)(1-\beta)(\hat{r}\sqrt{\beta}\coth\hat{r}\sqrt{\beta}-1)\right)\\
    &= \frac{1}{2\beta\sqrt{\beta}}\left(\frac{s^2}{\sqrt{\beta}}\phi(2\hat{r}\sqrt{\beta}) - \beta^2\phi(2\hat{r}) +4\hat{r}(\beta-s^2)(1-\beta)\psi(\hat{r}\sqrt{\beta})\right)
\end{align}
\qed

\textbf{Proof of Lemma (\ref{lem1})}. Using Taylor expansion about $x=0$ we get
\begin{align}
    \sinh xy- y^3\sinh x  -xy + xy^3 &= y^3\sum_{k=1}^{\infty}\frac{x^{2k+1}(y^{2k-1}-1)}{(2k+1)!}.
\end{align}
Since $x > 0$ and $y \in (0,1)$ each term in the above summation is negative so the overall expression is negative. 
\qed

\begin{lemma}
    \label{lem2}
    For all $x > 0$
    {
    \begin{align}
        x^3(\coth x +x(1-\coth^2 x)) - 6(x\coth x-1)^2 > 0. \label{eq6}
    \end{align}
    }%
\end{lemma}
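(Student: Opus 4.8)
The plan is to recognize the left-hand side of (\ref{eq6}) as $x^3\psi'(x)-6\psi(x)^2$, where $\psi(a)=a\coth a-1$ is the function already introduced in the proof of the theorem. Indeed, since $\frac{d}{dx}\coth x = 1-\coth^2 x$, we have $\psi'(x)=\coth x + x(1-\coth^2 x)$, while $(x\coth x-1)^2=\psi(x)^2$, so (\ref{eq6}) is exactly $x^3\psi'(x)>6\psi(x)^2$ for $x>0$. This reformulation is attractive because $\psi$ admits a manifestly positive series: from the partial-fraction expansion $\coth x=\frac1x+\sum_{n\ge 1}\frac{2x}{x^2+n^2\pi^2}$ we obtain
\begin{align}
\psi(x)=\sum_{n\ge 1}\frac{2x^2}{x^2+n^2\pi^2},\qquad \psi'(x)=\sum_{n\ge 1}\frac{4x\,n^2\pi^2}{(x^2+n^2\pi^2)^2},
\end{align}
the term-by-term differentiation being justified by uniform convergence of the differentiated series on compact subsets of $(0,\infty)$.

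Next I would substitute $u=x^2>0$ and $b_n=n^2\pi^2$. A direct computation gives $x^3\psi'(x)=4u^2\sum_{n\ge 1}\frac{b_n}{(u+b_n)^2}$ and $6\psi(x)^2=24u^2\bigl(\sum_{n\ge 1}\frac{1}{u+b_n}\bigr)^2$. Dividing by the common positive factor $4u^2$, the inequality (\ref{eq6}) reduces to
\begin{align}
\sum_{n\ge 1}\frac{b_n}{(u+b_n)^2} > 6\left(\sum_{n\ge 1}\frac{1}{u+b_n}\right)^2,\qquad u>0.
\end{align}

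The key observation is that the constant $6$ is precisely $\bigl(\sum_{n\ge 1}b_n^{-1}\bigr)^{-1}$, since $\sum_{n\ge 1}\frac{1}{n^2\pi^2}=\frac{1}{\pi^2}\cdot\frac{\pi^2}{6}=\frac16$. With this in hand the reduced inequality is nothing but Cauchy--Schwarz: writing $a_n=\sqrt{b_n}/(u+b_n)$ and $c_n=1/\sqrt{b_n}$,
\begin{align}
\left(\sum_{n\ge 1}\frac{1}{u+b_n}\right)^2
&= \left(\sum_{n\ge 1}a_n c_n\right)^2
\le \left(\sum_{n\ge 1}\frac{b_n}{(u+b_n)^2}\right)\left(\sum_{n\ge 1}\frac{1}{b_n}\right) \nonumber\\
&= \frac16\sum_{n\ge 1}\frac{b_n}{(u+b_n)^2},
\end{align}
which is the desired estimate. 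Strictness holds for every $u>0$ because equality in Cauchy--Schwarz would force $a_n/c_n=b_n/(u+b_n)$ to be independent of $n$, which fails as soon as $u>0$.

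The step I expect to be the main obstacle is establishing this reduced inequality cleanly, because (\ref{eq6}) is tight as $x\to 0$: its Taylor expansion begins only at order $x^8$, so any crude bound loses. The partial-fraction route sidesteps this by exposing the equality case $u=0$ as exactly the degenerate case of Cauchy--Schwarz, and recognizing that the coefficient $6$ is forced by $\sum_{n\ge 1}b_n^{-1}=\frac16$ is the key that makes the argument succeed. A purely real-variable alternative---clear the $\coth$'s by multiplying through by $\sinh^2 x$, expand $F(x)=x^3\sinh x\cosh x-x^4-6(x\cosh x-\sinh x)^2$ as a power series, and show that every coefficient is nonnegative---is also viable, but it leads to a binomial-sum inequality of the form $4^{\,n-1}\,(2n)(2n+1)(2n+2)\ge 24\sum_{k=1}^{n-1}k(n-k)\binom{2n+2}{2k+1}$ that is far more laborious to verify than the one-line Cauchy--Schwarz above.
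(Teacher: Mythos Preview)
Your proof is correct and takes a genuinely different route from the paper. The paper clears denominators by multiplying through by $\sinh^2 x$, then uses the double-angle identities to rewrite $(x\cosh x-\sinh x)^2$ in terms of $\cosh 2x$ and $\sinh 2x$ (so no Cauchy product is ever needed), and expands everything in a single Taylor series; the coefficient of $x^{2k+4}$ collapses to the closed form $(2k+3)(k-1)(k-2)$, which is positive for $k\ge 3$. Your approach instead passes through the Mittag--Leffler expansion of $\coth$ to write $\psi(x)=\sum_{n\ge 1} 2x^2/(x^2+n^2\pi^2)$, after which the inequality becomes a one-line Cauchy--Schwarz, the constant $6$ arising precisely because $\sum_{n\ge 1} 1/(n\pi)^2 = 1/6$. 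The paper's argument is entirely elementary (only power series) and produces an explicit formula for the deficit; your argument is more conceptual---it explains \emph{why} $6$ is the sharp constant and why equality is approached as $x\to 0$ (this is the degenerate Cauchy--Schwarz case $u=0$)---and would adapt at once to analogous inequalities for other functions admitting positive partial-fraction expansions. One small correction to your closing remark: the Taylor route is not as laborious as you suggest, because the double-angle substitution sidesteps the binomial sum you anticipate and yields a cleanly factored coefficient.
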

\begin{proof}
    Since $x > 0$ we have $\sinh^2 x > 0$. So we multiply by $\sinh^2 x$ throughout and prove the resulting inequality. Our claim is equivalent to $I>0$, where $I$ is defined as
    {
    \begin{align}
        I &= x^3(\cosh x \sinh x + x(\sinh^2 x - \cosh^2 x))-6(x\cosh x - \sinh x)^2\\
        &\ \ =x^3(\sinh(2x)/2 - x) - 6(x\cosh x - \sinh x)^2 \\
        &\ \ = x^3\left(\sinh(2x)/2 - x\right) - 6\left(x^2\cosh^2 x + \sinh^2 x - x\sinh(2x)\right) \\
        &\ \ = x^3\left(\frac{\sinh 2x }{2} - x\right) - 6\left(x^2\frac{\cosh 2x +1}{2} + \frac{\cosh 2x -1}{2} - x\sinh 2x\right).
    \end{align}
    }%
    Using Taylor expansion about $x=0$ we get
    {
    \begin{align}
        x^3\left(\frac{\sinh 2x }{2} - x\right) &= \frac{2x^6}{3} + \frac{2x^8}{15} + \sum_{k=3}^{\infty}\frac{2^{2k}x^{2k+4}}{(2k+1)!},\\
        6x^2\frac{\cosh 2x +1}{2} &= 6x^2 + 6x^4 + 2x^6 + \frac{4x^8}{15} + 6\sum_{k=3}^{\infty}\frac{2^{2k+1}x^{2k+4}}{(2k+2)!},\\
        6\frac{\cosh 2x -1}{2} &= 6x^2 + 2x^4 + \frac{4x^6}{15} + \frac{2x^8}{105} + 6\sum_{k=3}^{\infty}\frac{2^{2k+3}x^{2k+4}}{(2k+4)!},\\
        6x\sinh 2x &= 12x^2 + 8x^4 + \frac{8x^6}{5} + \frac{16x^8}{105} + 6\sum_{k=3}^{\infty}\frac{2^{2k+3}x^{2k+4}}{(2k+3)!}.
    \end{align}
    }%
    Combine the terms and observe that all terms through order $x^8$ cancel to obtain
    \begin{align}
        I &= \sum_{k=3}^{\infty} \frac{2^{2k+2}x^{2k+4}}{(2k+4)!}\left((k+2)(2k+3)(k+1) - 3(2k+4)(2k+3) - 12 + 12(2k+4)\right)\\
        &= \sum_{k=3}^{\infty}\frac{2^{2k+2}x^{2k+4}}{(2k+4)!}(2k+3)(k-1)(k-2).
    \end{align}
    Clearly, for $k \geq 3$ all the terms in the summation are positive. So, $I > 0$ and therefore the inequality (\ref{eq6}) holds. 
\end{proof}
\textbf{Proof of Lemma (\ref{lem4})}. This is equivalent to showing that for all $x > 0$ and $y \in (0,1)$, $f(x,y) > 0$ where
{
\begin{align}
    f(x,y) = \frac{\coth x - y\coth xy}{(xy\coth xy-1)(x\coth x-1)} + \frac{4(1-y^{-2})}{\sinh 2x-2x}.
\end{align}
}%
Fix $x = a > 0$. Clearly as $y$ tends to $1$, $f(a,y) \rightarrow 0$. So, it is sufficient to show that $f(a,y)$ is decreasing for all $y \in (0,1)$.
{
\begin{align}
    \frac{df(a,y)}{dy} &=  \frac{1}{y^3}\left(\frac{8}{\sinh 2a-2a} - \frac{a^3y^3}{a^3}\frac{(\coth ay+ay(1-\coth^2 ay))}{(ay\coth ay-1)^2}\right).
\end{align}
}%
Since $ay > 0$, using Lemma (\ref{lem2}) we obtain 
{
\begin{align}
    \frac{df(a,y)}{dy} &<  \frac{1}{y^3}\left(\frac{8}{\sinh 2a-2a} - \frac{6}{a^3}\right) < 0,
\end{align}
}%
where the last inequality follows by using the Taylor expansion of $\sinh(2a)$ about $0$ and noting that
\begin{align}
    \frac{\sinh 2a-2a}{8} &= \frac{a^3}{6} + \sum_{k=2}^{\infty}\frac{2^{2k-2}a^{2k+1}}{(2k+1)!}> \frac{a^3}{6} \ \ \text{ since } a > 0.
\end{align}
\qed

\textbf{Proof of Lemma (\ref{lem12})}. As $x \rightarrow 0$, $\sin xy-y\sin x \rightarrow 0$ and its derivative with respect to $x$ is $y(\cos xy-\cos x) > 0$ when $x \in (0,\pi)$ and $y \in (0,1)$.
\qed

\end{document}